\newtheorem{theorem}{Theorem}
\newtheorem{lemma}[theorem]{Lemma}
\newcommand{\E}{\mathbb E}
\newcommand{\N}{\mathbb N}
\begin{document}
\title{$\gamma$-variable first-order logic of preferential attachment random graphs\tnoteref{t1}}
\tnotetext[t1]{Present work was funded by RFBR, project number 19-31-60021.}
\author[1]{Y.A. Malyshkin}
\ead{yury.malyshkin@mail.ru}


\address[1]{Moscow Institute of Physics and Technology}

\begin{abstract}
We study logical limit laws for preferential attachment random graphs. In this random graph model, vertices and edges are introduced recursively: at time $1$, we start with vertices $0,1$ and $m$ edges between them. At step $n+1$ the vertex $n+1$ is introduced together with $m$ edges joining the new vertex with $m$ vertices chosen from $1,\ldots,n$ independently with probabilities proportional to their degrees plus a positive parameter $\delta$. We prove that this random graph obeys the convergence law for first-order sentences with at most $m-2$ variables.
\end{abstract}

\begin{keyword}
preferential attachment; convergence law; first-order logic
\end{keyword}

\maketitle

\section{Introduction}

In this paper, we study first-order (FO) logical laws for preferential attachment graphs. FO logical laws have been studied for different random graph models, such as binomial random graph $G(n,p)$ (\cite{Janson,Survey,Shelah,Spencer_Ehren}), geometrical random graphs (\cite{geo}), uniform attachment graphs (\cite{McColm,MZ21}), preferential attachment tree (\cite{MZ20}) and many others. Preferential attachment random graphs, that were introduced introduced in \cite{barabasi} by Barab{\'a}si and Albert, are widely used to model different complex networks  (see, e.g. \cite[Section 8]{Hof16}).

FO sentences about graphs are build of the following symbols: two relation  symbols, equality $=$ and adjacency $\sim$ (FO sentence does not recognize multiple edges between two vertices, just existing of an edge), variables (that take values in the vertex set), logical connectives $\wedge,\vee,\neg,\Rightarrow,\Leftrightarrow$, quantifiers $\exists,\forall$ and brackets (see the formal definition in, e.g.,~\cite{Libkin,Survey,Strange}). A series of random graphs $G_n$ on the vertex set $[n]:=\{1,...,n\}$, $n\in\N$, obeys FO zero-one law, if, for any FO sentence $\phi$, $\lim_{n\to\infty}\Pr(G_n\models\phi)\in\{0,1\}$. $G_n$ obeys FO convergence law, if, for every FO sentence $\phi$, $\Pr(G_n\models\phi)$ converges as $n\to\infty$. A primal tool for proving logical laws is the Ehrenfeucht-Fra\"{\i}ss\'{e} pebble game (see, e.g., \cite[Chapter 11.2]{Libkin}). There is a well-known relation between FO logic and pebble game.
\begin{lemma}
\label{Ehren}
Duplicator wins the $\gamma$-pebble game on $G$ and $H$ in $R$ rounds if and only if, for every FO sentence $\varphi$ with at most $\gamma$ variables and quantifier depth at most $R$, either $\varphi$ is true on both $G$ and $H$ or it is false on both graphs.
\end{lemma}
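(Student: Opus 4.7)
The plan is to prove this by induction on the number of rounds $R$, with a strengthened statement that accommodates arbitrary starting positions of the pebbles. Explicitly, I will show that for every partial pebble configuration $(a_1,\ldots,a_k;b_1,\ldots,b_k)$ with $k\leq\gamma$ pebbles already placed on $G$ and $H$ respectively, Duplicator has a winning strategy in the remaining $R$ rounds of the $\gamma$-pebble game if and only if the pointed structures $(G,a_1,\ldots,a_k)$ and $(H,b_1,\ldots,b_k)$ satisfy the same FO formulas $\varphi(x_1,\ldots,x_k)$ of quantifier depth at most $R$ using at most $\gamma$ variables. The lemma itself is the special case $k=0$, since FO sentences have no free variables.

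The base case $R=0$ is immediate: depth-$0$ formulas are Boolean combinations of atomic formulas $x_i=x_j$ and $x_i\sim x_j$ among the already pebbled variables, and their simultaneous satisfaction in the two pointed structures is exactly the condition that the map $a_i\mapsto b_i$ is a partial isomorphism between the induced substructures, which is the winning condition of the $0$-round game.

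For the inductive step, the forward direction (Duplicator wins $\Rightarrow$ FO agreement) is essentially a direct translation of the quantifier $\exists x_i$ into a round of play: given a witness $a'$ to some formula $\exists x_i\,\psi$ of depth $\leq R+1$ in one graph, I let Spoiler place the $i$-th pebble on $a'$, follow Duplicator's winning response $b'$ in the other graph, and apply the induction hypothesis at depth $R$ to $\psi$ at the enlarged position to transfer the formula; universal quantifiers are handled dually and Boolean connectives pass through.

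The backward direction is where the main work lies. It rests on the finiteness fact that, over the finite relational signature $\{=,\sim\}$ with a fixed set of variables $\{x_1,\ldots,x_\gamma\}$ and bounded quantifier depth, there are only finitely many FO formulas up to logical equivalence. Given FO agreement at the current position, when Spoiler plays pebble $i$ on some $a'\in G$, I form the (finite) conjunction $\Psi(x_1,\ldots,x_\gamma)$ of all depth-$R$, $\gamma$-variable formulas satisfied by the resulting tuple in $G$. Then $\exists x_i\,\Psi$ has depth $R+1$ and holds in $G$, hence by hypothesis holds in $H$, producing a Duplicator response $b'\in H$ at which FO agreement at depth $R$ is preserved; the inductive hypothesis then supplies a winning strategy for the remaining $R$ rounds. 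The case where Spoiler moves in $H$ is symmetric, completing the induction.
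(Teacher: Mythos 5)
Your proof is correct and is precisely the standard argument (induction on quantifier depth over pointed configurations, with the finiteness of depth-$R$, $\gamma$-variable formulas up to equivalence powering the backward direction). The paper does not prove this lemma at all --- it states it as a well-known fact and points to \cite[Chapter 11.2]{Libkin}, where essentially your argument appears --- so there is nothing to contrast; your write-up correctly handles the one subtlety specific to the pebble (as opposed to plain Ehrenfeucht--Fra\"{\i}ss\'{e}) game, namely that re-placing pebble $i$ corresponds to re-quantifying the variable $x_i$, keeping the total variable count at $\gamma$.
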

The existence of the winning strategy for Duplicator depends on a local structure of graphs. Sufficient conditions for it have been formulated in \cite{MZ21}. Let $n_0<N_0<n$ be integers, $a=3R$, $G_n$ be a graph on $[n]$. Define the properties
\begin{itemize}
\item[${\sf Q1}$] For every cycle with at most $a$ vertices one of the following two options holds:

\begin{itemize}

\item either it has all vertices inside $[N_0]$, and any path connecting $[n_0]$ with this cycle and heaving length at most $a$ has all vertices inside $[N_0]$, 
 
\item or it is at distance at least $a$ from $[n_0]$; 

\end{itemize}

every path with at most $a$ vertices joining two vertices of $[n_0]$ has all vertices inside $[N_0]$; 

and any two cycles with all vertices in $[n]\setminus[n_0]$ and having at most $a$ vertices are at distance at least $a$ from each other.

\item[${\sf Q2}$] For every $b\leq a$, there exist at least $m$ distinct copies of $C_b$ ($C_b$ is the cycle of length $b$) with all vertices in $[n]\setminus[N_0]$.
\item[${\sf Q3}$] Every vertex of $[N_0]$ has degree at least $N_0+m$.
\end{itemize}
\begin{lemma}(\cite[Claim 4]{MZ21})
\label{claim_m-game}
Let $H_1,H_2$ be graphs on vertex sets $[n_1]$ and $[n_2]$ respectively with minimum degrees at least $m$. Let $H_1|_{[N_0]}= H_2|_{[N_0]}$ and both $H_1$ and $H_2$ have properties ${\sf Q1}$, ${\sf Q2}$, ${\sf Q3}$. Then Duplicator wins the $(m-2)$-pebble game on $H_1$ and $H_2$ in $R$ rounds.
\end{lemma}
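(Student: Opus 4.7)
The plan is to exhibit an explicit winning strategy for Duplicator that maintains, across rounds, a partial isomorphism between progressively smaller neighborhoods of the pebbled vertices in $H_1$ and $H_2$. Set $r_k=3(R-k)$, and preserve the invariant that after round $k$ the pebble map extends to a graph isomorphism of the unions of $r_k$-balls around pebbles in the two graphs, which agrees with the identity on $[N_0]$. Decreasing the radius by $3$ between consecutive rounds both accommodates the newly pebbled vertex and has total budget exactly $a=3R$.

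The strategy is organized by three zones. If Spoiler's pebble falls inside the core $[N_0]$, Duplicator mirrors it identically, which is legal since $H_1|_{[N_0]}=H_2|_{[N_0]}$. If Spoiler's pebble falls inside some ball $B(v_i,r_{k-1})$ already controlled by the invariant, Duplicator plays the corresponding vertex under the existing isomorphism. Otherwise Spoiler plays in an unexplored region; here Duplicator must find a fresh vertex in $H_2$ whose $r_k$-neighborhood is isomorphic to the new one in $H_1$. Property ${\sf Q1}$ guarantees that away from the pebbled zones and from $[N_0]$ the graph is locally a tree, except for isolated small cycles mutually separated by distance at least $a$; property ${\sf Q2}$ provides, for every cycle length $b\leq a$, at least $m$ copies far from $[N_0]$, so at least one is disjoint from the at most $m-3$ already-pebbled zones; property ${\sf Q3}$ supplies enough degree in the core to handle pebbles that straddle the boundary between $[N_0]$ and its complement.

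The main obstacle is maintaining the invariant when Spoiler tries to \emph{close a cycle} by placing a pebble adjacent to two previously-pebbled vertices outside $[N_0]$; one must appeal to ${\sf Q1}$ to rule out short cycle closures unless the entire structure already lies in $[N_0]$, in which case the identity on the core does the job. The restriction to $m-2$ pebbles enters precisely when extending a tree-like partial isomorphism by one neighbor: the number of prior pebbles that could constrain the choice is at most $m-3$, which is strictly less than the minimum degree $m$, so Duplicator always has a legal extension. Iterating this procedure for $R$ rounds preserves the invariant, and by Lemma \ref{Ehren} this yields the winning strategy claimed.
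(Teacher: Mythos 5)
The paper itself does not prove this lemma; it is imported verbatim from \cite{MZ21} (Claim~4 there), so your sketch has to be judged on its own terms, and while it has the right ingredients (a locality-based Duplicator strategy organised by the three zones, {\sf Q2} giving $m$ far-apart cycles against at most $m-3$ prior pebbles, {\sf Q1} giving a tree-plus-isolated-short-cycles structure outside the core), the invariant you propose cannot be maintained. An isomorphism of the unions of $r_k$-balls that agrees with the identity on $[N_0]$ fails already at round one if Spoiler pebbles a vertex of $[N_0]$: the hypothesis only gives $H_1|_{[N_0]}=H_2|_{[N_0]}$ as induced subgraphs, while by {\sf Q3} such a vertex has degree at least $N_0+m$, most of it realised outside $[N_0]$, and there is no reason for its $r_k$-ball in $H_1$ to be isomorphic to its $r_k$-ball in $H_2$. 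The invariant that can actually be carried through is necessarily weaker: exact agreement only on the pebbled vertices, on which of them lie in $[N_0]$, and on threshold-truncated distances between pebbles, to $[n_0]$/$[N_0]$, and to the (by {\sf Q1} at most one) nearby short cycle; {\sf Q3} is then used to manufacture fresh neighbours of core vertices, not to transport ball isomorphisms.

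A second, independent problem is that the linear radius schedule $r_k=3(R-k)$ is incompatible with ball-isomorphism bookkeeping. If Spoiler plays $u$ at distance close to $r_{k-1}$ from a pebble $v_i$, then $B(u,r_k)$ reaches distance about $2r_{k-1}-3$ from $v_i$ and is not contained in the previously controlled region, so ``answer with the image under the existing isomorphism'' does not restore the invariant; forcing the containment $B(u,r_k)\subseteq B(v_i,r_{k-1})$ for all relevant $u$ requires geometrically decreasing radii and hence $a$ of order $3^R$, whereas {\sf Q1} and {\sf Q2} only control cycles and paths of length $a=3R$. This mismatch is exactly why the linear budget $3R$ only supports a distance-threshold invariant rather than the one you state. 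Finally, the count ``at most $m-3$ prior pebbles versus minimum degree $m$'' is indeed the reason the pebble number is $m-2$, but it needs the further observation, which you omit, that each prior pebble can obstruct at most one neighbour of the vertex being extended --- two obstructed neighbours would close a second short cycle in a small ball, contradicting {\sf Q1}. As written the proposal is an outline of the right kind of strategy built on an invariant that is false, so it does not constitute a proof.
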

The implementation for this result (proven in \cite{MZ21}) is that if, for some sequence of random graphs $G_n$, for any $\epsilon>0$, there are $n_0,N_0,N$, such that conditions of Lemma~\ref{claim_m-game} hold for all $n_1,n_2>N$ with probability at least $1-\epsilon$, then $G_n$ obeys FO convergence law.
\newline

Let describe preferential attachment graph model that we consider. Let fix $m\in\N,$ $m\geq 2$. We start with complete graph $G_{1}$ that consists of vertices $0,1$ and $m$ edges between them. To build graph $G_{n+1}$ from $G_n$ we add one new vertex $n+1$ and independently draw $m$ edges from it to vertices of $G_n$, chosen from already existing vertices with probabilities proportional to their degree plus some parameter $\delta>0$, i.e., given $G_n$, probability to draw a given edge to the vertex $i$, $i=0,...,n$, equals
$$\frac{\deg_{G_n}i+\delta}{(m+\delta)n+\delta}.$$
The degree destribution of $G_n$ obeys power law with exponent $\tau=3+\frac{\delta}{m}>3$. 

Let formulate our main result.
\begin{theorem}
\label{th:main}
$G_n$ obeys  $\mathrm{FO}^{m-2}$ convergence law.
\end{theorem}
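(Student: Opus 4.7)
The plan is to invoke the criterion stated just after Lemma~\ref{claim_m-game}: it suffices to exhibit, for every $\epsilon>0$, integers $n_0<N_0<N$ such that, with probability at least $1-\epsilon$, for every $n_1,n_2>N$ the graphs $G_{n_1}$ and $G_{n_2}$ have minimum degree at least $m$, satisfy ${\sf Q1}$--${\sf Q3}$, and agree with $G_{N_0}$ on $[N_0]$. Agreement on $[N_0]$ is automatic, because a vertex added after step $N_0$ contributes edges only from itself to earlier vertices, so the induced subgraph on $[N_0]$ is frozen after step $N_0$; the minimum-degree requirement is essentially built into the construction, every new vertex being born with $m$ outgoing edges. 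Property ${\sf Q3}$ is the easy random statement: by the classical Polya-urn/martingale representation of preferential attachment, $\deg_{G_n} i\to\infty$ almost surely for every fixed $i$, and degrees are monotone in $n$, so one can choose $N$ so large that with probability at least $1-\epsilon/3$ every $i\in[N_0]$ has $\deg_{G_n} i\geq N_0+m$ for all $n\geq N$.

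For ${\sf Q2}$, a $b$-cycle inside $[n]\setminus[N_0]$ arises, for instance, whenever a newly added vertex $v>N_0$ sends two of its $m\geq 2$ edges to the endpoints of a length-$(b-2)$ path already present in $[n]\setminus[N_0]$. A first-moment computation shows that the expected number of such cycles grows to infinity with $n$, and a Doob-martingale concentration argument along the vertex-arrival filtration then yields at least $m$ of them with probability $\geq 1-\epsilon/(3a)$ for $n$ large.

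The core of the proof is ${\sf Q1}$, which asserts the locally tree-like structure away from $[n_0]$. I would verify each of its three clauses by enumerating the ``bad'' edge patterns that would violate it---a short cycle straddling $[n_0]$, a short $[n_0]$-to-$[n_0]$ path exiting $[N_0]$, or two nearby short cycles in $[n]\setminus[n_0]$---and bounding each by the basic preferential-attachment estimate: conditionally on $G_{v-1}$, the probability that $v$ draws a prescribed edge to a vertex of $\deg{+}\delta$-weight $w$ equals $\frac{w+\delta}{(m+\delta)(v-1)+\delta}=O(w/v)$. On a high-probability ``degree regularity'' event controlling the degrees of $[N_0]$-vertices at all later times (which holds by the standard PA degree-tail bounds, using $\delta>0$ and hence $\tau>3$), the sum of these probabilities over $v>N_0$ and over the polynomially many shape/label configurations is $o_{N_0\to\infty}(1)$, because every bad configuration forces at least two simultaneous attachments by a single late vertex to a pre-specified small set. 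The main obstacle is the careful bookkeeping: the $m$ edges drawn at a single step share a common denominator and so are not independent; early vertices have polynomially large degrees that must be controlled uniformly in time; and the parameters must be chosen in the right order---first $n_0$ large to suppress the worst local obstructions, then $N_0\gg n_0$ so that all short cycles that touch $[n_0]$ are captured inside $[N_0]$, and finally $N\gg N_0$ large enough for the residual errors from ${\sf Q2}$ and ${\sf Q3}$ to fit within $\epsilon$.
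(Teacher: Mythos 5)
Your high-level strategy coincides with the paper's: reduce to the pebble-game criterion following Lemma~\ref{claim_m-game} and verify the minimum-degree condition and ${\sf Q1}$--${\sf Q3}$, choosing the parameters in the order $n_0$, then $N_0$, then $N$. The paper, however, obtains all of its counting estimates from one imported tool (Theorem~\ref{th:subgraphs}): a connected graph on $k$ vertices with minimum degree $2$ and at least $k+1$ edges (``rare'') has expected count $\Theta(1)$ uniformly in $n$ (Lemma~\ref{lem:rare_subgraphs}), while a $b$-cycle has expected count $\Theta(\ln n)$ (Lemma~\ref{lem:cycles}). ${\sf Q1}$ is then derived not from a union bound giving probability $o(1)$ (the expected number of rare subgraphs does \emph{not} vanish) but from the observation that a bounded, nondecreasing, integer-valued count stabilizes at an a.s.\ finite time, after which no new rare subgraphs appear; your direct conditional-probability bookkeeping is a plausible, though unexecuted, elementary substitute for this.

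The genuine gap is in ${\sf Q2}$. Since $\E N_n(C_b,\pi)\asymp\ln n$ grows only logarithmically, a Doob-martingale/Azuma argument along the vertex-arrival filtration cannot work: even if every martingale difference were $O(1)$, summing over $n$ steps yields fluctuations of order $\sqrt n$, which swamps a mean of order $\ln n$; and the differences are not $O(1)$ in any case, since a late vertex attaching to high-degree vertices can create many cycles at once. The paper instead uses a second-moment method (Lemma~\ref{lem:conc}): the union of two distinct overlapping $b$-cycles is a rare graph, so the expected number of overlapping pairs is $O(1)=o\bigl((\E N_n(C_b,\pi))^2\bigr)$, which yields $\Pr(N_n(C_b,\pi)>C)\to1$ for every constant $C$. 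Some variance control of this kind is indispensable and is missing from your argument. A secondary gloss: minimum degree $m$ is \emph{not} automatic from the construction, because the FO language ignores edge multiplicities and a new vertex may send several of its $m$ edges to the same neighbour; the paper handles this via the a.s.\ finite time $T$ after which no double edges are drawn, a consequence of the maximum degree being $O(n^{1/(\tau-1)+\epsilon})=o(n)$.
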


To prove this theorem we need some properties of described preferential attachment graph model.
One of the properties is that the maximal degree (we denote it as $M(n)$) grows approximayely as $n^{\frac{1}{\tau-1}}$. It could be formulated as follow
\begin{lemma}
For any $\epsilon>0$ 
$$\Pr\left(\forall n>n_0:n^{\frac{1}{\tau-1}-\epsilon}<M(n)<n^{\frac{1}{\tau-1}+\epsilon}\right)\to 1$$
as $n_0\to\infty$.
\end{lemma}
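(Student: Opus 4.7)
The plan is to track each vertex's degree via its natural martingale, establish uniform $L^p$ moment bounds, and combine these with a union bound to control the maximum both over vertices and over time. For a vertex $i$ and time $n\geq i$, set $D_i(n):=\deg_{G_n}i$. The attachment rule gives
$$\mathbb{E}[D_i(n+1)+\delta\mid G_n]=(D_i(n)+\delta)(1+\beta_n),\qquad \beta_n=\Theta(1/n),$$
so $W_i(n):=(D_i(n)+\delta)/c_{i,n}$, with $c_{i,n}:=\prod_{k=i}^{n-1}(1+\beta_k)$, is a positive martingale. The deterministic product satisfies $c_{i,n}=(1+o(1))C_\delta(n/i)^{1/(\tau-1)}$, since the relevant exponent in the product is $1/(\tau-1)$. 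A standard computation using $\delta>0$ shows that $\sup_{n\geq i}\mathbb{E}[W_i(n)^p]\leq K_p<\infty$ for every $p\geq 1$; equivalently, $W_i(n)\to\xi_i$ almost surely and in $L^p$.

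For the upper bound on $M(n)$, Markov's inequality and the moment bound give
$$\Pr\bigl(D_i(n)\geq n^{1/(\tau-1)+\epsilon}\bigr)\leq \frac{K_p\,c_{i,n}^p}{n^{p(1/(\tau-1)+\epsilon)}}\leq K'_p\, i^{-p/(\tau-1)}\, n^{-p\epsilon}.$$
Choosing $p$ so large that both $p/(\tau-1)>1$ and $p\epsilon>1$, the double sum over $i\leq n$ and $n>n_0$ is bounded by $K''_p\,n_0^{1-p\epsilon}$, which tends to $0$. For the lower bound, the limit $\xi_1$ of the positive martingale $W_1(n)$ is strictly positive almost surely; this follows from the classical P\'olya-urn representation of $(D_1(n))_n$, which identifies $\xi_1$ with a random variable of continuous distribution supported on $(0,\infty)$. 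Hence $D_1(n)=\xi_1 c_{1,n}(1+o(1))\geq n^{1/(\tau-1)-\epsilon}$ eventually almost surely, and $M(n)\geq D_1(n)$ supplies the matching lower bound.

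The hardest step is the uniform $L^p$ boundedness of $W_i(n)$ for $p$ arbitrarily large, which is what makes the upper-bound double sum summable. It can be obtained either by a careful iteration of the one-step moment recursion for $\mathbb{E}[(D_i(n+1)+\delta)^p\mid G_n]$ or by invoking the P\'olya-urn representation of the preferential attachment process; both arguments crucially use $\delta>0$, which keeps the corrections after division by $(1+\beta_n)^p$ of summable size. Once this moment bound is secured, the rest is a direct Markov-and-union-bound calculation.
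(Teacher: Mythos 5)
The paper does not actually prove this lemma: it declares it standard and points to M\'ori (2005) and to Theorem~1, case~1 of \cite{M21}, whose argument is exactly the degree-martingale, uniform $L^p$-moment, Markov-plus-union-bound scheme you use (with the a.s.\ positivity of the martingale limit supplying the lower bound). Your proposal is correct and is essentially that same standard approach, written out at a comparable level of detail to the sources the paper cites.
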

It is a standard (and well known) result so we omit its proof here (the stronger version for $m=1$ was proven in \cite{Mori05}, changing to $m>1$ does not affect the proof, which could be done as, for example, the proof of the similar result in Theorem 1, case 1 in \cite{M21}).
This lemma results in that there exists a random variable $T$, $T<\infty$ almost surely, such that no double edges would be drawn after time $T$.

The other well know property of this model (see, e.g., \cite{Mori05} for $m=1$, for $m>1$, the arguments are the same) is that, for any vertex $i$, the fraction $\frac{\deg i}{n^{\frac{1}{\tau-1}}}$ converges in distribution to a positive random variable, in particular, $\deg i$ exceeds $N_0+m$ after a random time $T_i$, $T_i<\infty$ almost surely.  

We would also need the following result about subgraphs of the preferential attachment graph. Let $H$ be a graph on $k$ vertices. Let $\pi$ be an ordering of vertices of $H$ (i.e. $\pi$ is a mapping of the vertex set of $H$ to $[k]$). Let $N_n(H,\pi)$ be the number of different subgraph, isomorphic to $H$ in $G_n$ that correspond to the ordering $\pi$ (meaning that an older vertex in $G_n$ corresponds to an older vertex in the ordering $\pi$). Define the function
$$B(H,\pi)=\max_{s=0,1,...,k}\left\{-s-\sum_{i=s+1}^k\left(\frac{1}{\tau-1}d_H^{(in)}(\pi^{-1}(i))+\left(1-\frac{1}{\tau-1}\right)d_H^{out}(\pi^{-1}(i))\right)\right\},$$
where $d_{H}^{in}(v)$ is the number of vertices $u$ of $H$, adjacent to $v$, such that $\pi(u)>\pi(v)$ and $d_{H}^{out}(v)$ is the number of vertices $u$ of $H$, adjacent to $v$, such that $\pi(u)<\pi(v)$.
Let $r(H,\pi)$ be the number of optimizers to this maximum.
Then the following result was proven in \cite{Gar19}.

\begin{theorem}(\cite[Theorem 5.1.2]{Gar19})
\label{th:subgraphs}
There exist $0<C_1(H,\pi)\leq C_2(H,\pi)<\infty$ such that
$$C_1(H,\pi)\leq\lim_{t\to\infty}\frac{\mathbb{E}(N_n(H,\pi))}{n^{k+B(H,\pi)}\ln^{r(H,\pi)-1}n}\leq C_2(H,\pi).$$
\end{theorem}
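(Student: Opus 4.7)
The plan is to apply linearity of expectation and write
\begin{equation*}
\mathbb{E}(N_n(H,\pi)) = \sum_{1 \leq v_1 < \cdots < v_k \leq n} \Pr\bigl((v_1,\ldots,v_k) \text{ realises } (H,\pi) \text{ in } G_n\bigr),
\end{equation*}
so that the task splits into (i) estimating the joint probability that a given ordered $k$-tuple spans a copy of $H$ with the correct age-ordering, and (ii) evaluating the resulting sum. The crux of (i) is the classical asymptotic $\mathbb{E}(\deg_{G_n}(i)) = (c+o(1))(n/i)^{1/(\tau-1)}$ for fixed $i$ and $n\to\infty$, which follows by induction from the one-step recurrence for $\mathbb{E}(\deg_{G_{n+1}}(i)\mid G_n)$ and is then promoted to a multiplicative concentration statement via an Azuma/Doob martingale for the suitably normalized degree. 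This concentration lets us replace random degrees by their expectations inside products, at the cost of an asymptotically negligible error.

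For (i), an edge $\{\pi^{-1}(a),\pi^{-1}(b)\}\in E(H)$ with $\pi(a)<\pi(b)$ is realised precisely at time $v_{\pi(b)}$, when the newer endpoint is born, with conditional probability $\asymp \deg_{G_{v_{\pi(b)}-1}}(v_{\pi(a)})/v_{\pi(b)}$. Different edges of the target copy are either drawn by different vertices (hence conditionally independent given the earlier graph) or belong to the $m$ independent draws made by the same vertex. Processing the vertices in reverse age order and taking conditional expectations, then substituting the degree asymptotic, gives
\begin{equation*}
\Pr\bigl((v_1,\ldots,v_k) \text{ realises } (H,\pi)\bigr) \asymp \prod_{\substack{\{a,b\}\in E(H)\\ \pi(a)<\pi(b)}} v_{\pi(a)}^{-1/(\tau-1)}\, v_{\pi(b)}^{1/(\tau-1)-1}.
\end{equation*}
Regrouping factors vertex-by-vertex turns the product into $\prod_{j=1}^k v_j^{-\alpha_j}$, where $\alpha_j := \frac{1}{\tau-1}d_H^{in}(\pi^{-1}(j)) + \bigl(1-\frac{1}{\tau-1}\bigr)d_H^{out}(\pi^{-1}(j))$ is exactly the exponent appearing implicitly in the definition of $B(H,\pi)$.

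For (ii), it remains to estimate
\begin{equation*}
S_n := \sum_{1\leq v_1 < \cdots < v_k \leq n} \prod_{j=1}^k v_j^{-\alpha_j}.
\end{equation*}
A dyadic decomposition of each $v_j$ into scales $2^{\ell_j}$, or equivalently an iterated integral estimate, shows that for each $s\in\{0,1,\ldots,k\}$ the contribution from tuples with $v_1,\ldots,v_s = O(1)$ and $v_{s+1},\ldots,v_k$ ranging freely up to $n$ is of order $n^{(k-s)-\sum_{i>s}\alpha_i} = n^{k+(-s-\sum_{i>s}\alpha_i)}$ up to logarithmic factors. The leading contribution therefore comes from the $s$ realising the maximum in the definition of $B(H,\pi)$, yielding the base exponent $n^{k+B(H,\pi)}$. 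Logarithmic corrections $\ln^{r(H,\pi)-1} n$ then appear precisely in the critical cases where some $\alpha_j=1$ produces an integral of $x^{-1}$, or where several cut-offs $s$ tie for the maximum; a careful bookkeeping of these tie-breaks gives exactly $r(H,\pi)-1$ factors of $\ln n$, and also produces the two-sided constants $C_1,C_2$.

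The main obstacle will be making the probability estimate in (i) uniform over the tuple, most notably for tuples in which several $v_j$ are close together (where the mean-field degree asymptotic is less accurate) or in which some $v_j$ are very small (where $\deg_{G_n}(v_j)$ is not yet concentrated). I would dispose of this by splitting the sum into a bulk regime, where the $v_j$ are well separated on a logarithmic scale and the degree concentration and product approximation both apply directly, and a boundary regime, whose combinatorial volume is small enough that its contribution can be absorbed into the constants $C_1,C_2$ without affecting the leading order $n^{k+B(H,\pi)}\ln^{r(H,\pi)-1}n$.
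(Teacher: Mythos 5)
First, note that the paper does not prove this statement at all: it is quoted verbatim from Garavaglia's thesis (\cite[Theorem 5.1.2]{Gar19}) and used as a black box, so there is no in-paper proof to compare against. That said, your architecture --- linearity of expectation, a per-tuple probability of the form $\prod_j v_j^{-\alpha_j}$ with $\alpha_j=\frac{1}{\tau-1}d_H^{in}(\pi^{-1}(j))+(1-\frac{1}{\tau-1})d_H^{out}(\pi^{-1}(j))$, and then an optimization over the cut $s$ producing $n^{k+B(H,\pi)}\ln^{r(H,\pi)-1}n$ --- is the standard route and matches how the result is established in the source.

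There is, however, a genuine gap in your step (i). You propose to justify the product formula by ``multiplicative concentration'' of $\deg_{G_n}(i)$ around its expectation via an Azuma/Doob martingale, and then to ``replace random degrees by their expectations inside products.'' This is false for preferential attachment: as the paper itself recalls, $\deg_{G_n}(i)/n^{1/(\tau-1)}$ converges to a \emph{non-degenerate} positive random variable, so $\deg_{G_n}(i)/\E\,\deg_{G_n}(i)$ does not tend to $1$; the suitably normalized degree is itself a martingale with a random limit, and no concentration around the mean is available. The correct repair, and what the source actually does, is to compute expectations of products directly: iterated conditional expectation in reverse age order reduces the per-tuple probability to joint (factorial) moments of the form $\E\bigl[\prod_a \deg_{G_t}(v_a)\,\mathbf 1(\text{earlier edges present})\bigr]$, and an explicit one-step recurrence (or the P\'olya-urn representation of the model) shows that these factorize up to multiplicative constants depending only on $k$ and $m$, uniformly in the tuple. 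With that substitution your step (i) goes through, and your step (ii) --- the dyadic/iterated-integral evaluation of $\sum\prod_j v_j^{-\alpha_j}$, with ties among the optimizing cuts accounting for the $\ln^{r(H,\pi)-1}n$ factor --- is correct as sketched.
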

This result provides bounds on the expected number of subgraphs. If the expected number of subgraphs grows to infinity, we would need lemma, that follows from Proposition 5.1.4 in \cite{Gar19}.

\begin{lemma}
\label{lem:conc}
Consider a subgraph $H$ with an ordering $\pi$ such that $\E N_n(H,\pi)\to\infty$ as $n\to\infty$. Denote by $\mathcal{H}$ the set of all possible connected subgraphs (with ordering) composed by two distinct copies of $(H,\pi)$ (such that the ordering of corresponding subsets in new graph is the same as in the old ones) with at least one edge in common. Let $\sum_{(\hat{H},\hat{\pi})\in\hat{\mathcal{H}}}\E N_n(\hat{H},\hat{\pi})=o\left(\E(N_n(H,\pi))^2\right)$ as $n\to\infty$. Then for any constant $C>0$ we have $\Pr(N_n(H,\pi)>C)\to 1$ as $n\to\infty$.
\end{lemma}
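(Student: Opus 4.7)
The plan is to apply the second moment method. Writing $\mu_n := \E N_n(H,\pi)$, since $\mu_n\to\infty$ Chebyshev's inequality gives
\[
\Pr(N_n(H,\pi)\le C)\le \frac{\mathrm{Var}(N_n(H,\pi))}{(\mu_n-C)^2},
\]
so it suffices to prove $\E N_n(H,\pi)^2=(1+o(1))\mu_n^2$, equivalently $\mathrm{Var}(N_n(H,\pi))=o(\mu_n^2)$.

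Expand $\E N_n(H,\pi)^2=\sum_{H_1,H_2}\Pr(H_1,H_2\subseteq G_n)$ over ordered pairs of (labelled) copies of $(H,\pi)$ and partition the pairs by their intersection into three classes: (a) vertex-disjoint pairs; (b) pairs sharing at least one vertex but no edge; (c) pairs sharing at least one edge. Class (c) coincides with $\sum_{(\hat H,\hat\pi)\in\hat{\mathcal H}}\E N_n(\hat H,\hat\pi)$ and is $o(\mu_n^2)$ by hypothesis. For class (a), I would show the contribution is $(1+o(1))\mu_n^2$: conditioning on the attachment history, the joint probability that two edge-disjoint copies are both present factors as the product of single-copy probabilities up to $1+o(1)$ corrections, because the attachment denominators $(m+\delta)t+\delta$ are common to both copies and the relevant numerators involve disjoint sets of edges.

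The main obstacle is class (b). Each pair sharing $j\ge 1$ vertices and no edges corresponds to a ``glued'' ordered graph $(\tilde H,\tilde\pi)$ on $2k-j$ vertices with $2|E(H)|$ edges, so by Theorem~\ref{th:subgraphs} its contribution is of order $n^{(2k-j)+B(\tilde H,\tilde\pi)}\log^{r(\tilde H,\tilde\pi)-1}n$, whereas $\mu_n^2\asymp n^{2k+2B(H,\pi)}\log^{2r(H,\pi)-2}n$. One must verify, for every $j\ge 1$ and every admissible gluing, the inequality $(2k-j)+B(\tilde H,\tilde\pi)\le 2k+2B(H,\pi)$, strictly in the exponent or with a strict drop in the logarithmic multiplicity. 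The subtlety is that any $s$-optimizer of $B(\tilde H,\tilde\pi)$ can be split into a pair of optimizers $(s_1,s_2)$ for the two constituent copies: the degree contributions $-\tfrac{1}{\tau-1}d^{(in)}-(1-\tfrac{1}{\tau-1})d^{out}$ of each shared vertex are counted once in $\tilde H$ but twice in the disjoint union, so the loss of $j$ vertices is offset by $j$ in the worst case, producing a boundary tie in the polynomial exponent that must be broken using the multiplicities $r(\tilde H,\tilde\pi)$ versus $2r(H,\pi)-1$ and the constants of Theorem~\ref{th:subgraphs}.

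This careful book-keeping is precisely what Proposition 5.1.4 of \cite{Gar19} carries out, and combining it with the contributions from classes (a) and (c) yields $\mathrm{Var}(N_n(H,\pi))=o(\mu_n^2)$, from which Chebyshev gives the claim.
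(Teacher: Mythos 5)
The paper gives no independent proof of this lemma---it is stated as a direct consequence of Proposition~5.1.4 in \cite{Gar19}, which is exactly the result your class-(b) bookkeeping ultimately defers to---so your second-moment outline (Chebyshev plus the split into disjoint, vertex-overlapping, and edge-overlapping pairs) is consistent with, and more explicit than, the paper's treatment. The one point to keep in view is that the vertex-overlapping-but-edge-disjoint pairs are genuinely not covered by the lemma's stated hypothesis, so your decision to source that step from \cite{Gar19} rather than prove it is the honest place to leave it; the paper leaves it there as well.
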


\section{Prove of the main result}
We first use Theorem~\ref{th:subgraphs} to establish two lemmas, that we later use to prove that $G_n$ satisfies conditions of Lemma~\ref{claim_m-game} for all $n_1,n_2>N$ for some random $N<\infty$, which would result in statement of Theorem~\ref{th:main}. Recall that $\tau>3$, which we would use in the proof of the lemmas.
\begin{lemma} 
\label{lem:rare_subgraphs}
Let $H$ be a connected graph on $k$ vertices with the minimun degree at least $2$ and at least $k+1$ edges (we would call such graph rare). Then there exist $0<C_1(H,\pi)\leq C_2(H,\pi)<\infty$ such that
$$C_1(H,\pi)\leq \mathbb{E}N_n(H,\pi)\leq C_2(H,\pi).$$
\end{lemma}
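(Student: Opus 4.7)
The plan is to read off the conclusion directly from Theorem~\ref{th:subgraphs}: since that theorem gives $\E N_n(H,\pi) = \Theta\bigl(n^{k+B(H,\pi)}\ln^{r(H,\pi)-1} n\bigr)$, a two-sided bound by positive constants will follow as soon as we verify
$$k + B(H,\pi) = 0 \qquad \text{and} \qquad r(H,\pi) = 1$$
for every ordering $\pi$. Everything therefore reduces to a combinatorial analysis of $B(H,\pi)$ under the two hypotheses ``minimum degree at least $2$'' and ``$|E(H)| \geq k+1$''.

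Write $\alpha := 1/(\tau-1)$; the assumption $\tau > 3$ gives $\alpha < 1/2$, which is the key numerical input. Fix $\pi$ and $s \in \{0, \ldots, k\}$, and set $V_s := \{v \in V(H) : \pi(v) > s\}$, so $|V_s| = k - s$. Counting each edge of $H$ according to which of its endpoints lies in $V_s$ and where they fall in the ordering, one checks the identity
$$\sum_{i=s+1}^{k} \left(\alpha \, d_H^{in}(\pi^{-1}(i)) + (1-\alpha)\, d_H^{out}(\pi^{-1}(i))\right) = e_{\mathrm{in}}(V_s) + (1-\alpha)\, e_{\mathrm{cr}}(V_s),$$
where $e_{\mathrm{in}}(V_s)$ and $e_{\mathrm{cr}}(V_s)$ denote the numbers of edges of $H$ with both, respectively exactly one, endpoint in $V_s$. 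The minimum-degree hypothesis gives $2 e_{\mathrm{in}}(V_s) + e_{\mathrm{cr}}(V_s) = \sum_{v \in V_s}\deg_H(v) \geq 2(k-s)$, and combined with $2(1-\alpha) > 1$ this forces $e_{\mathrm{in}}(V_s) + (1-\alpha)\, e_{\mathrm{cr}}(V_s) \geq k - s$. Hence the $s$-th bracket in the maximum defining $B(H,\pi)$ never exceeds $-k$; since $s = k$ realises equality trivially, $B(H,\pi) = -k$.

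For $r(H,\pi) = 1$ I track the equality case. Equality at a given $s$ forces both $(2(1-\alpha) - 1)\, e_{\mathrm{cr}}(V_s) = 0$ and $\sum_{v \in V_s}\deg_H(v) = 2(k - s)$. The first identity, via $\alpha < 1/2$, yields $e_{\mathrm{cr}}(V_s) = 0$, which together with the connectedness of $H$ allows only $s \in \{0, k\}$; the second then forces every vertex of $V_s$ to have degree exactly $2$ in $H$, so the case $s = 0$ would make $H$ a $k$-cycle, contradicting $|E(H)| \geq k+1$. Only $s = k$ is optimal, so $r(H,\pi) = 1$. Substituting these values into Theorem~\ref{th:subgraphs} gives the desired constants $C_1(H,\pi), C_2(H,\pi)$ for all sufficiently large $n$, and the finitely many small $n$ are absorbed into the constants. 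The main obstacle is the bookkeeping in the edge-counting identity; once that identity is in place, the bound on $B$ and the uniqueness of the optimizer both drop out cleanly from the two numerical inequalities $\alpha < 1/2$ and $|E(H)| \geq k+1$.
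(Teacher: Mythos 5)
Your proposal is correct and follows essentially the same route as the paper: both compute $B(H,\pi)=-k$ with the unique optimizer $s=k$ (using $\tau>3$, minimum degree $2$, and $|E(H)|\geq k+1$) and then read off the conclusion from Theorem~\ref{th:subgraphs}. Your edge-counting identity via $e_{\mathrm{in}}(V_s)$ and $e_{\mathrm{cr}}(V_s)$ is just a cleaner bookkeeping of the paper's comparison of in-degree and out-degree sums, and your explicit equality analysis for $r(H,\pi)=1$ is a slightly more careful version of the same argument.
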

\begin{proof}
Let us find $B(H,\pi)$ and $r(H,\pi)$ for such graphs. Note that, for any graph, the summation of all indegrees is always equal to the summation of all outdegrees. Moreover, the summation of outdegrees of the last $j<k$ vertices would exceed the summation of indegrees (since we consider a connected graph there would be edges drawn from them to older vertices). Also, the summation of their degrees would be at least $2j$. Since, $\tau>3$ for $0<s<k$, we get
$$-s-\sum_{i=s+1}^k\left(\frac{1}{\tau-1}d_H^{(in)}(\pi^{-1}(i))+\left(1-\frac{1}{\tau-1}\right)d_H^{out}(\pi^{-1}(i))\right)$$
$$=-s-\frac{1}{\tau-1}\sum_{i=s+1}^{k}d_H^{(in)}(\pi^{-1}(i))-\left(1-\frac{1}{\tau-1}\right)\sum_{i=s+1}^{k}d_H^{out}(\pi^{-1}(i))$$
$$<-s-\frac{1}{2}\sum_{i=s+1}^{k}d_H^{(in)}(\pi^{-1}(i))-\frac{1}{2}\sum_{i=s+1}^{k}d_H^{out}(\pi^{-1}(i))$$
$$=-s-\frac{1}{2}\sum_{i=s+1}^k\left((d_H^{(in)}(\pi^{-1}(i))+d_H^{out}(\pi^{-1}(i)))\right)$$
$$\leq -s-\frac{1}{2}2(k-s)=-k.$$
For $s=0$ (in this case the summations of outdegrees and indegrees are equal and at least $k+1$) we would get
$$-s-\sum_{i=s+1}^k\left(\frac{1}{\tau-1}d_H^{(in)}(\pi^{-1}(i))+\left(1-\frac{1}{\tau-1}\right)d_H^{out}(\pi^{-1}(i))\right)$$
$$=-\frac{1}{\tau-1}\sum_{i=1}^{k}d_H^{(in)}(\pi^{-1}(i))-\left(1-\frac{1}{\tau-1}\right)\sum_{i=1}^{k}d_H^{out}(\pi^{-1}(i))$$
$$\leq-k-1.$$
Therefore $B(H,\pi)=-k$ and the maximum is achieved on exactly one value (when $s=k$). Lemma~\ref{lem:rare_subgraphs} follows from Theorem \ref{th:subgraphs}.
\end{proof}

\begin{lemma} 
\label{lem:cycles}

Let $H$ be a cycle on $k$ vertices. Then there exist $0<C_3(H,\pi)\leq C_4(H,\pi)<\infty$ such that, for large enough $n$,
$$C_3(H,\pi)\ln n\leq \mathbb{E}N_n(H,\pi)\leq C_4(H,\pi)\ln n.$$
\end{lemma}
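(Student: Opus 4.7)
The plan is to apply Theorem~\ref{th:subgraphs} and reduce the statement to the two claims $B(H,\pi)=-k$ and $r(H,\pi)=2$ for every ordering $\pi$ of $H=C_k$. Once both are established the theorem immediately yields $\E N_n(H,\pi)\asymp n^{k-k}\ln^{2-1}n=\ln n$, which is exactly the required bound.

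The key structural simplification is that every vertex of $C_k$ has degree $2$, hence $d_H^{in}(v)+d_H^{out}(v)=2$ for every $v$. I would first dispose of the two endpoint values of $s$. For $s=k$ the inner sum is empty, so the expression equals $-k$. For $s=0$, summing $d_H^{in}$ over all $k$ vertices counts each edge of $C_k$ exactly once, and the same holds for $d_H^{out}$; since the weights $\tfrac{1}{\tau-1}$ and $1-\tfrac{1}{\tau-1}$ add up to $1$, the value is $-\tfrac{1}{\tau-1}k-\bigl(1-\tfrac{1}{\tau-1}\bigr)k=-k$.

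For intermediate $s$, $0<s<k$, I would set $L=\{\pi^{-1}(s+1),\dots,\pi^{-1}(k)\}$. Because $L$ is a proper nonempty vertex subset of a cycle, $H[L]$ is a disjoint union of $p\ge 1$ paths, and a routine count gives $\sum_{i>s}d_H^{in}(\pi^{-1}(i))=(k-s)-p$ and $\sum_{i>s}d_H^{out}(\pi^{-1}(i))=(k-s)+p$, the extra $2p$ in the latter being the number of boundary edges from the $p$ arcs of $L$ into $V\setminus L$. Substituting, the $s$ and $k-s$ contributions collapse to $-k$ and leave the correction
$$-s-\tfrac{(k-s)-p}{\tau-1}-\Bigl(1-\tfrac{1}{\tau-1}\Bigr)\bigl((k-s)+p\bigr)=-k+p\Bigl(\tfrac{2}{\tau-1}-1\Bigr)=-k-p\,\tfrac{\tau-3}{\tau-1},$$
which is strictly less than $-k$ since $\tau>3$ and $p\ge 1$. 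Hence the maximum $-k$ is attained at exactly $s=0$ and $s=k$, giving $r(H,\pi)=2$.

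I do not anticipate a serious obstacle: the whole argument is a finite algebraic computation driven by the rigid arc structure of cycles. The only subtle point is the boundary count of crossing edges --- namely, verifying that each arc in $H[L]$ contributes exactly two edges into $V\setminus L$ regardless of its length (including length one) --- after which Theorem~\ref{th:subgraphs} immediately produces the asserted $\ln n$ asymptotics.
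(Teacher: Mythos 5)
Your proposal is correct and follows essentially the same route as the paper: compute $B(H,\pi)=-k$ and $r(H,\pi)=2$, then invoke Theorem~\ref{th:subgraphs}. The only difference is cosmetic --- for intermediate $s$ you derive the exact value $-k-p\,\tfrac{\tau-3}{\tau-1}$ via the arc decomposition of $H[L]$, whereas the paper settles for the strict inequality obtained from comparing the weights $\tfrac{1}{\tau-1}<\tfrac12$ with the degree sum $2(k-s)$; both give the needed strict gap since $\tau>3$.
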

\begin{proof}
Once again, let find $B(H,\pi)$ and $r(H,\pi)$ for such cycles. Note that if we consider only the last $j$ (for $j<k$) vertices, then for them the summation of outdegrees exceeds the summation of indegrees and their total degree is $2j$. Since $\tau>3$ for $s<k$, similarly to Lemma~\ref{lem:rare_subgraphs}, we get
$$-s-\sum_{i=s+1}^k\left(\frac{1}{\tau-1}d_H^{(in)}(\pi^{-1}(i))+\left(1-\frac{1}{\tau-1}\right)d_H^{out}(\pi^{-1}(i))\right)$$
$$\leq -s-\frac{1}{2}2(k-s)=-k.$$
For $s=0$ (in this case the summations of outdegrees and indegrees are equal $k$) we would get
$$-s-\sum_{i=s+1}^k\left(\frac{1}{\tau-1}d_H^{(in)}(\pi^{-1}(i))+\left(1-\frac{1}{\tau-1}\right)d_H^{out}(\pi^{-1}(i))\right)$$
$$=-\frac{1}{\tau-1}\sum_{i=1}^{k}d_H^{(in)}(\pi^{-1}(i))-\left(1-\frac{1}{\tau-1}\right)\sum_{i=1}^{k}d_H^{out}(\pi^{-1}(i))$$
$$= -\frac{1}{\tau-1}k-\left(1-\frac{1}{\tau-1}\right)k=-k.$$
Therefore $B(H,\pi)=-k$ and the maximum is achieved on exactly two values (when $s=k$ and $s=0$). Hence $r(H,\pi)=2$. Lemma~\ref{lem:cycles} follows from Theorem \ref{th:subgraphs}.
\end{proof}

Now let check conditions of Lemma 3 from \cite{MZ21}. Fix $a\geq 3$, $a\in\mathbb{N}$.

First, we need to check that all degrees are at least $m$. Recall that there is random $T$ that no double edges are drawen after $T$. Hence for any $\epsilon>0$ there is (a not random) $T_0$, such that with a probability at least $1-\epsilon$ no double edges are drawn after $T_0$. That means that all vertices after $T_0$ have degrees at least $m$ (the degrees of vertices that appears before $T_0$ would be adressed later, as part of the third condition).

Note that the number of subgraphs isomorphic to a given graph in $G_n$ is not decreasing with $n$. Due to Lemma~\ref{lem:rare_subgraphs} the expected number of rare subgraphs in $G_n$ with at most $3a$ vertices is bounded by some constant $C_{a}$ for all $n$. By Markov's inequality, for any $\epsilon>0$ with probability at least $1-\epsilon$ the number of rare subgraphs in $G_n$ with at most $3a$ vertices is bounded from above by $C_{a}/\epsilon$ for all $n$. Since the number of subgraphs is bounded and increasing (and is natural number) there is a random moment, that is finite almost surely, such that after it the number of subgraphs stay the same. Hence, after it no rare subgraphs (with at most $3a$ vertices) would be formed. Therefore, for any $\epsilon>0$, there is $n_0\in\N$, $n_0>T_0$ such that with probability at least $1-\epsilon$ there are no rare subgraphs with at most $3a$ vertices outside of $[n_0]$. Since two cycles of length at most $a$ at distance at most $a$ from each other form (together with a shortest path between them) a rare graph on at most $3a$ vertices, all cycles with all vertices in $[n]\setminus[n_0]$ and having at most $a$ vertices are at distance at least $a$ from each other. That provides us with the third part of condition Q1.

To get the first and the second parts, consider subgraphs that contain two cycles of length at most $a$ that are connected by a path of length at most $n_0+a$. These graphs satisfy conditions of Lemma~\ref{lem:rare_subgraphs} and hence the expected number of such graphs is bounded from above by a constant. Therefore (as in previous argument) there is a random moment, after which no such subgraphs would be formed. As result, for any $\epsilon>0$, there is $N_0>n_0$, such that with probability $1-\epsilon$ there are no vertices outside of $[N_0]$ that belong to such subgraphs. Since $[n_0]$ induces a connected subgraph and contains a cycle of length $3$, that means that any cycle of length at most $a$ is either at distance at least $a$ from $[n_0]$ or completly inside $[N_0]$. Also any path connecting cycle in $[N_0]$ with $[n_0]$ that have length at most $a$ has all vertices in $[N_0]$. Moreover, any two cycles of length at most $a$ that are outside of $[N_0]$ have distance at least $n_0+a+1>a$ between them. Hence we check the condition Q1 of Lemma~\ref{claim_m-game}.

Let us check the second condition. Due to Lemma~\ref{lem:cycles} for any $b\leq a$ we get that $\E N_n(C_b,\pi)\to \infty$. Also, any connected graph $\hat{H}$ composed of two distinct cycles is a rare graph and hence, as was mentioned above, for any $\epsilon>0$ there is a constant $C_{\epsilon}(\hat{H})$, such that $N_n(H)<C$ with probability at least $1-\epsilon$ for all $n$. Hence, conditions of Lemma~\ref{lem:conc} hold and $\Pr(N_n(C_b,\pi)>m+N_0^b+N_0)\to \infty$ as $n\to\infty$. There are at most $N_0^b$ cycles on $b$ vertices that are completly inside $[N_0]$. Hence, at least $N_0+m$ of cycles of length $b$ have at least $1$ vertex outside of $[N_0]$ and, by the definition of $N_0$, they are at a distance at least $a$ from each other. Note that there could be at most $N_0$ cycles that do not share a vertex with each other but share a vertex with $[N_0]$.  Therefore at least $m$ of cycle of length $b$ have no vertices in $[N_0]$, which results in Q2.

The last condition holds due to the fact that the degree of any given vertex $i$ exceeds $N_0+m$ after some random moment $T_i$ and hence for any $\epsilon>0$ there is (a not random) $N>N_0$, such that with probability at least $1-\epsilon$ the degrees of the first $N_0$ (in particular, the first $T_0$ since $N_0>n_0>T_0$) vertices would exceed $N_0+m$ in $G_n$.

Hence for any $\epsilon>0$ there exist $N_0,N,n_0$ such that the conditions of  Lemma~\ref{claim_m-game} hold for any $n_1,n_2>N$ with probability at least $1-\epsilon$, which proves our theorem.

\section*{Acknowledgements.}
The study was funded by RFBR, project number 19-31-60021. The author is grateful to Maksim Zhukovskii for helpful discussions.

\end{document}